  \newtheorem{defi}{Definition}[section]
  \newtheorem{exa}[defi]{Example}
  \newtheorem{thm}[defi]{Theorem}
  \newtheorem{cor}[defi]{Corollary}
\newcommand{\PP}{{\mathbb P}}
\newcommand{\RR}{{\mathbb R}}
\newcommand{\cH}{{\mathcal H}}
\newcommand{\cS}{{\mathcal S}}
\newcommand{\cV}{{\mathcal V}}
\newcommand{\ann}{{\mathrm{ann}}}
\newcommand{\rad}{{\mathrm{rad}}}
\newcommand{\Char}{{\mathrm{char}}\,}%Modif. HH.
\newcommand{\id}{{\mathrm{id}}}
\newcommand{\dis}%{{\mathrel{\vartriangle}}}
                 {{\mathrel{\scriptstyle{\triangle}}}}
\newcommand{\notdis}{{\not\!\!\dis}}
\newcommand{\eps}{{\varepsilon}}
\newcommand{\GF}{{\mathrm{GF}}}
\newcommand{\GL}{{\mathrm{GL}}}
\newcommand{\quer}[1]{{\overline{#1}}}
\let\phi=\varphi
\let\theta=\vartheta
\newcommand{\DelimArray}[4]{\left#1\begin{array}{*{#3}{c}}#4\end{array}\right#2}
\newcommand{\SDelimArray}[4]{\hbox{\scriptsize\arraycolsep=.5\arraycolsep
  $\left#1\!\!\begin{array}{*{#3}{c}}#4\end{array}\!\!\right#2$}}
\newcommand{\Mat}{\DelimArray()}
\newcommand{\SMat}{\SDelimArray()}
\newenvironment{proof}%[1]%
    {\begin{trivlist} \item {\sl Proof:}} %#1\/:}}%
    {{}\hfill $\square$ \end{trivlist}} %%Hans {}\hfill erg\"{a}nzt und \/ gel\"{o}scht
\newcounter{abbildung} %% Zaehler fuer Abbildungen
\begin{document}
\title{Radical parallelism on projective lines\\ and non-linear models of affine spaces}
\author{Andrea Blunck \and Hans Havlicek}
\date{}

%\dedication{Version: \today} % deaktivieren

\maketitle

%%%%%%%%%%%%%%%%%%%%%%%%%%%%%%%
\begin{abstract}
We introduce and investigate an equivalence relation called
\emph{radical parallelism} on the projective line over a ring. It is
closely related with the Jacobson radical of the underlying ring. As
an application, we present a rather general approach to non-linear
models of affine spaces and discuss some particular examples.
\par
\emph{MSC 2000}: 51C05, 51B05, 51N10.
\par
\emph{Keywords}: Projective line over a ring; Jacobson radical;
Cremona transformation; affine space.
\end{abstract}

\section{Introduction}\label{se:intro}

If two points of the projective line over a ring are non-distant then
they are also said to be \emph{parallel}. This terminology goes back
to the projective line over the real dual numbers, where parallel
points represent parallel spears of the Euclidean plane
\cite[2.4]{benz-73}. In general, this parallelism of points is not an
equivalence relation. In the present article we shall introduce
another concept of ``parallelism'' on the projective line over a
ring. In order to avoid ambiguity we call this the \emph{radical
parallelism}, since it reflects the \emph{Jacobson radical} of a ring
$R$ in terms of the projective line over $R$. The two kinds of
parallelism coincide exactly for local rings.
\par
The radical parallelism is defined and discussed in Section
\ref{se:parallel}. There are several results on the projective line
over a local ring which can be generalized to an arbitrary ring $R$
as follows: Consider radically parallel points instead of parallel
points and the Jacobson radical of $R$ instead of the only maximal
ideal of a local ring. For example, the radical parallelism is always
an equivalence relation. It is the equality relation if, and only if,
$\rad\,R=\{0\}$. Next, in Section \ref{se:cremona}, we consider a
$K$-algebra $R$ and the associated affine chain geometry. Its
automorphism group contains bijective transformations $R\to R$
(without ``exceptional points'') which are not affine
transformations, provided that $\rad\,R\neq\{0\}$ and $K\neq\GF(2)$.
In particular, when $\dim_K R$ is finite, then these mappings are
birational, i.e., they are Cremona transformations.
\par
We may regard $R$ as an affine space over $K$ and fix one of the
non-affine transformations from the above. Then $R$ together with the
images of the lines under this transformation yields a non-linear
model of the affine space $R$ over $K$. Two particular cases of such
models are investigated in detail. The first example arises from the
ring of dual numbers over $K$. It yields, for $K=\RR$, the well-known
\emph{parabola model} of the real affine plane; cf.\
\cite[p.~67]{polster+s-01}. For an arbitrary ground field $K$, a
similar parabola model is described. However, some properties of the
parabola model of the real affine plane do not hold any more if $K$
has characteristic $2$. This is due to the fact that in this case all
tangent lines of a parabola are parallel. The second example is based
upon the three-dimensional $K$-algebra of upper $2\times 2$-matrices
over $K$. As before, we obtain a kind of ``parabola model'' which can
be easily described in terms of the associated chain geometry.
\par
Throughout this paper we shall only consider associative rings with a
unit element $1$, which is inherited by subrings and acts unitally on
modules. The trivial case $1=0$ is excluded. The group of invertible
elements of a ring $R$ will be denoted by $R^*$.
\par
Let us recall the definition of the projective line over a ring $R$:
Consider the free left $R$-module $R^2$. Its automorphism group is
the group $\GL_2(R)$ of invertible $2\times 2$-matrices with entries
in $R$. A pair $(a,b)\in R^2$ is called {\em admissible}, if there
exists a matrix in $\GL_2(R)$ with $(a,b)$ being its first row.
Following \cite[p.~785]{herz-95}, the {\em projective line over\/}
$R$ is the orbit of the free cyclic submodule $R(1,0)$ under the
action of $\GL_2(R)$. So
\begin{equation*}
  \PP(R):=R(1,0)^{\GL_2(R)}
\end{equation*}
or, in other words, $\PP(R)$ is the set of all $p\subset R^2$ such
that $p=R(a,b)$ for an admissible pair $(a,b)\in R^2$. Two such pairs
represent the same point exactly if they are left-proportional by a
unit in $R$. We adopt the convention that points are represented by
admissible pairs only. (Cf.\ \cite[Proposition 2.1]{blu+h-00b} for
the possibility to represent points also by non-admissible pairs.)

The point set $\PP(R)$ is endowed with the symmetric and
anti-reflexive relation {\em distant\/} ($\dis$) defined by
\begin{equation*}
  \dis:=(R(1,0), R(0,1))^{\GL_2(R)}.
\end{equation*}
Letting $p=R(a,b)$ and $q= R(c,d)$ gives then
\begin{equation*}
  p\,\dis\, q\,\Leftrightarrow\, \Mat2{a&b\\c&d}\in \GL_2(R).
\end{equation*}
\par
The vertices of the {\em distant graph\/} on $\PP(R)$ are the points
of $\PP(R)$, two vertices of this graph are joined by an edge if, and
only if, they are distant. Given a point $p\in\PP(R)$ let
\begin{equation*}
  \PP(R)_p:=\{x\in\PP(R)\mid x\,\dis\, p\}
\end{equation*}
be the neighbourhood of $p$ in the distant graph.
\par
We shall no longer use the term ``parallel points'' in the present
paper, but we speak instead of ``non-distant points'' ($\notdis$).
The sign $\parallel$ will be used for the radical parallelism which
is defined below.
 \par
The \emph{Jacobson radical} of a ring $R$, denoted by $\rad\,R$, is
the intersection of all the maximal left (or right) ideals of $R$. It
is a two sided ideal of $R$ and its elements can be characterized as
follows:
\begin{equation}\label{eq:radikal}
 b\in\rad\,R \,\Leftrightarrow\, 1-ab\in R^*\mbox{ for all }a\in R
 \,\Leftrightarrow\, 1-ba\in R^*\mbox{ for all }a\in R;
\end{equation}
see \cite[pp.~53--54]{lam-91}.

\section{Radical parallelism}\label{se:parallel}

A point $p\in\PP(R)$ is called \emph{radically parallel} to a point
$q\in\PP(R)$ if
\begin{equation}\label{eq:def-parallel}
  x\,\dis\, p \,\Rightarrow\, x\,\dis\, q
\end{equation}
holds for all $x\in\PP(R)$. In this case we write $p \parallel q$.
Clearly, the relation $\parallel$ is reflexive and transitive; we
shall see in due course that $\parallel$ is in fact an equivalence
relation.
\par
Each matrix $\gamma\in\GL_2(R)$ determines an automorphism
$\PP(R)\to\PP(R):p\mapsto p^\gamma$ of the distant graph. Hence, by
definition,
\begin{equation}\label{eq:par-invariant}
  p\parallel q \,\Leftrightarrow\, p^\gamma \parallel q^\gamma
\end{equation}
holds for all $p,q\in\PP(R)$ and all $\gamma\in\GL_2(R)$.

The connection between the radical parallelism on $\PP(R)$ and the
Jacobson radical of $R$ is as follows:
\begin{thm}\label{thm:1}
The point $R(1,0)$ is radically parallel to $q\in\PP(R)$ exactly if
there is an element $b$ in the Jacobson radical $\rad\,R$ such that
$q=R(1,b)$.
\end{thm}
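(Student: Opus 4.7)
The plan is to reduce both implications to concrete unit conditions in $R$ and then to invoke the radical characterization~(\ref{eq:radikal}). The key computational observation, which I would record first, is that for arbitrary $c,d,b\in R$, the matrix $\Mat2{c&d\\1&b}$ lies in $\GL_2(R)$ if and only if $d-cb\in R^*$. To verify this I would left-multiply with the elementary matrix $\Mat2{1&-c\\0&1}\in\GL_2(R)$, which turns $\Mat2{c&d\\1&b}$ into the antidiagonal form $\Mat2{0&d-cb\\1&b}$, whose invertibility is easy to decide directly. An analogous computation shows that $R(c,d)\,\dis\,R(1,0)$ is equivalent to $d\in R^*$.

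For the ``if'' direction, assume $b\in\rad\,R$ and let $x=R(c,d)$ be any point distant from $R(1,0)$, so that $d\in R^*$. Then I would factor $d-cb=d(1-d^{-1}cb)$; applying (\ref{eq:radikal}) to the element $d^{-1}c\in R$ shows that $1-d^{-1}cb\in R^*$, hence $d-cb\in R^*$. The key observation then yields $x\,\dis\,R(1,b)$, verifying the defining condition~(\ref{eq:def-parallel}) of radical parallelism.

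For the ``only if'' direction, I would write $q=R(\alpha,\beta)$ and apply (\ref{eq:def-parallel}) to $x=R(0,1)$, which is distant from $R(1,0)$. This forces $R(0,1)\,\dis\,q$, i.e., $\alpha\in R^*$, so one may rescale the admissible representative to obtain $q=R(1,b)$ with $b=\alpha^{-1}\beta$. To see $b\in\rad\,R$, I would then run through the test points $x=R(a,1)$ for each $a\in R$: the pair $(a,1)$ is admissible (the second row $(1,0)$ completes it to a matrix in $\GL_2(R)$) and $x\,\dis\,R(1,0)$ since its second coordinate is a unit. The hypothesis gives $x\,\dis\,R(1,b)$, and the key observation specialized to $c=a$, $d=1$ yields $1-ab\in R^*$. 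Because $a\in R$ was arbitrary, (\ref{eq:radikal}) forces $b\in\rad\,R$. The main potential obstacle is the key matrix observation itself, which must be handled carefully in the non-commutative setting; once the row reduction above is carried out, the rest of the argument follows from routine manipulations.
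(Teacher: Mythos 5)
Your proof is correct and follows essentially the same route as the paper: both arguments normalize $q$ to $R(1,b)$ via the test point $R(0,1)$, reduce distance from $R(1,b)$ to a unit condition of the form $1-ab\in R^*$ by an explicit elementary row operation (your factorization $d-cb=d(1-d^{-1}cb)$ is the paper's matrix identity~(\ref{eq:matrix-gleichung}) in scalar form), and then invoke~(\ref{eq:radikal}).
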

\begin{proof}
We start with a characterization of $\rad\,R$ in terms of matrices.
For all $a,b\in R$ we have
\begin{equation}\label{eq:matrix-gleichung}
  \Mat2{1    & b \\ 0 & 1}
  \Mat2{1-ba & 0 \\ a & 1}
  =
  \Mat2{1    & b \\ a & 1}.
\end{equation}
So, by (\ref{eq:radikal}), we get
\begin{equation}\label{eq:matrix-rad}
  b\in \rad\,R \,\Leftrightarrow\,
  \Mat2{1    & b \\ a & 1}
  \in\GL_2(R) \mbox{ for all }a\in R.
\end{equation}
\par
Clearly, we have
\begin{equation}\label{eq:distant-(1,0)}
  \PP(R)_{R(1,0)}=\{x\in\PP(R)\mid x \,\dis\, R(1,0)\}=\{R(a,1)\mid a\in R\}.
\end{equation}
So (\ref{eq:matrix-rad}) shows immediately that $R(1,0)$ is radically
parallel to every point $q=R(1,b)$ with $b\in\rad\,R$.
 \par
Conversely, suppose that $R(1,0)\parallel q$. Then $R(0,1)\,\dis\,
R(1,0)$ implies $R(0,1)\,\dis\, q$. So we may set $q=R(1,b)$ with
$b\in R$. Now (\ref{eq:distant-(1,0)}) and $R(1,0)\parallel q$ imply
that the right hand side of (\ref{eq:matrix-rad}) is fulfilled,
whence $b\in\rad\,R$.
\end{proof}

In order to obtain an alternative description of the radical
parallelism we consider the factor ring $R/\rad\,R=:\quer R$ and the
canonical epimorphism $R\to \quer R:a\mapsto a+\rad\,R=:\quer a$. It
has the crucial property
\begin{equation}\label{eq:einheit-invariant}
  a\in R^* \,\Leftrightarrow\, \quer a\in \quer R\,^*
\end{equation}
for all $a\in R$; cf. \cite[Proposition 4.8]{lam-91}.
\par
Now we turn to the corresponding projective lines. The mapping
\begin{equation}\label{eq:def-phiquer}
  \PP(R)\to\PP(\quer R): p=
  R(a,b)\mapsto \quer R(\quer a,\quer b)=:\quer p
\end{equation}
is well defined and surjective \cite[Proposition 3.5]{blu+h-00b}.
Furthermore, as a geometric counterpart of
(\ref{eq:einheit-invariant}) we have
\begin{equation}\label{eq:dist-invariant}
  p\,\dis\, q\,\Leftrightarrow\, \quer p\,\dis\, \quer q
\end{equation}
for all $p,q\in\PP(R)$, where we use the same symbol to denote the
distant relations on $\PP(R)$ and on $\PP(\quer R)$, respectively.
See \cite[Propositions 3.1, 3.2]{blu+h-00b}.

\begin{thm}\label{thm:2}
The mapping given by (\ref{eq:def-phiquer}) has the property
\begin{equation}\label{eq:par=quergleich}
  p \parallel q \,\Leftrightarrow\, \quer p=\quer q
\end{equation}
for all $p,q\in\PP(R)$.
\end{thm}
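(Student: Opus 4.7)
The plan is to reduce to the case $p=R(1,0)$ and then combine Theorem~\ref{thm:1} with the unit-preservation property (\ref{eq:einheit-invariant}) to characterize $\bar p=\bar q$ in terms of the Jacobson radical.

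First I would observe that both sides of (\ref{eq:par=quergleich}) are $\GL_2(R)$-invariant: the left-hand side by (\ref{eq:par-invariant}), and the right-hand side because the canonical ring epimorphism $R\to\bar R$ induces a well-defined map $\PP(R)\to\PP(\bar R)$ that intertwines the natural $\GL_2(R)$-action on $\PP(R)$ with the $\GL_2(\bar R)$-action on $\PP(\bar R)$ (via the induced map $\GL_2(R)\to\GL_2(\bar R)$, whose existence rests on (\ref{eq:einheit-invariant})). Since $\GL_2(R)$ acts transitively on $\PP(R)$, I may assume without loss of generality that $p=R(1,0)$, so that $\bar p=\bar R(1,0)$.

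For the forward implication, if $R(1,0)\parallel q$ then Theorem~\ref{thm:1} yields $q=R(1,b)$ with $b\in\rad\,R$. Hence $\bar b=0$ and $\bar q=\bar R(1,0)=\bar p$. For the converse, assume $\bar p=\bar q$; write $q=R(c,d)$ with $(c,d)$ admissible. The equation $\bar R(\bar c,\bar d)=\bar R(1,0)$ produces a unit $\bar u\in\bar R^{\,*}$ with $\bar u\bar c=1$ and $\bar u\bar d=0$, so $\bar c\in\bar R^{\,*}$. By (\ref{eq:einheit-invariant}) this lifts to $c\in R^{*}$, and therefore $q=R(1,c^{-1}d)$. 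Setting $b:=c^{-1}d$, the equality $\bar q=\bar R(1,\bar b)=\bar R(1,0)$ forces $\bar b=0$, i.e. $b\in\rad\,R$. Theorem~\ref{thm:1} now gives $R(1,0)\parallel q$.

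The main obstacle I anticipate is the converse direction, where one must extract a convenient representative of $q$ from the abstract equality $\bar p=\bar q$ in $\PP(\bar R)$; the point is that a priori $q$ is only given by some admissible pair $(c,d)$, and one needs to know that admissibility plus $\bar p=\bar q$ force $c$ itself (not merely $\bar c$) to be a unit. Property (\ref{eq:einheit-invariant}) is precisely what makes this lift possible. As a free bonus, since $\bar p=\bar q$ is visibly a symmetric relation, (\ref{eq:par=quergleich}) also confirms that radical parallelism is symmetric, completing the verification (announced after (\ref{eq:def-parallel})) that $\parallel$ is an equivalence relation.
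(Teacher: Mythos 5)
Your proposal is correct and follows essentially the same route as the paper's own proof: reduce to $p=R(1,0)$ by the transitivity of $\GL_2(R)$ on $\PP(R)$, using (\ref{eq:par-invariant}) together with the equivariance $\quer{r^\gamma}={\quer r}\,^{\quer\gamma}$ of the map (\ref{eq:def-phiquer}), and then settle the base case by combining Theorem~\ref{thm:1} with (\ref{eq:einheit-invariant}) to show that $\quer q=\quer R(\quer 1,\quer 0)$ holds exactly for $q=R(1,b)$ with $b\in\rad\,R$. The paper merely packages the base case as the single equivalence (\ref{eq:sternchen}) rather than arguing the two directions separately; the content is identical.
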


\begin{proof}
Let $q=R(c,d)\in\PP(R)$. Then (\ref{eq:einheit-invariant}) shows that
$\quer R(\quer 1,\quer 0)=\quer q$ if, and only if, $c\in R^*$ and
$d\in\rad\,R$ or, equivalently, $q=R(1,b)$ with
$b:=c^{-1}d\in\rad\,R$. So from Theorem \ref{thm:1} we get
\begin{equation}\label{eq:sternchen}
R(1,0)\parallel q\,\Leftrightarrow\, \quer R(\quer 1,\quer 0)=\quer q
\end{equation}
for all $q\in\PP(R)$. Now consider arbitrary points $p,q\in\PP(R)$.
There is a matrix $\gamma\in\GL_2(R)$ with $p^\gamma=R(1,0)$. We have
\begin{equation}\label{eq:sternderl}
      \quer {r^\gamma}={\quer r}\,^{\quer \gamma}
\end{equation}
for all $r\in\PP(R)$, where $\quer \gamma\in\GL_2(\overline R)$ is
obtained by applying the canonical epimorphism to the entries of the
matrix $\gamma\in\GL_2(R)$; cf.\ \cite[Proposition 3.1]{blu+h-00b}.
With (\ref{eq:par-invariant}), (\ref{eq:sternchen}), and
(\ref{eq:sternderl}) we conclude
\begin{equation}\label{eq:namenlos}
  p\parallel q
  \,\Leftrightarrow\,
  p^\gamma=R(1,0)\parallel q^\gamma
  \,\Leftrightarrow\,
  \quer {p^\gamma}= \quer R(\quer 1,\quer 0)=\quer {q^\gamma}
  \,\Leftrightarrow\,
  \quer p^{\quer \gamma}=\quer q^{\quer \gamma}
  \,\Leftrightarrow\,
  \quer p=\quer q.
\end{equation}
This completes the proof.
\end{proof}

As an immediate consequence of Theorem \ref{thm:2} we obtain:

\begin{cor}\label{cor:1}
The radical parallelism $\parallel$ on the projective line over a
ring is an equivalence relation.
\end{cor}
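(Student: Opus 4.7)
The plan is to deduce the corollary directly from Theorem \ref{thm:2}, which characterizes radical parallelism as the pullback of equality under the map $\PP(R)\to\PP(\quer R)$, $p\mapsto\quer p$. Since equality on $\PP(\quer R)$ is obviously reflexive, symmetric, and transitive, the equivalent relation $\parallel$ on $\PP(R)$ inherits all three properties.

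More concretely, I would note that reflexivity and transitivity were already observed right after the definition of $\parallel$, so the only thing left to verify is symmetry. Given $p\parallel q$, Theorem \ref{thm:2} yields $\quer p=\quer q$; reversing this equality gives $\quer q=\quer p$, and applying Theorem \ref{thm:2} in the other direction we conclude $q\parallel p$.

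I do not anticipate any obstacle here: the entire work has been done in Theorem \ref{thm:2}, and the corollary is a one-line consequence of the fact that any relation pulled back from equality along a map is an equivalence relation. If one wished to emphasize this, the proof could be compressed to the single sentence: \emph{By Theorem \ref{thm:2}, $\parallel$ is the preimage of the equality relation on $\PP(\quer R)$ under the map $p\mapsto\quer p$, and hence an equivalence relation.}
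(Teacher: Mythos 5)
Your proof is correct and matches the paper's approach exactly: the paper derives the corollary as an immediate consequence of Theorem \ref{thm:2}, since $\parallel$ is the preimage of equality on $\PP(\quer R)$ under $p\mapsto\quer p$. Nothing to add.
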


In particular, $\parallel$ is a symmetric relation despite its
(seemingly asymmetric) definition in formula (\ref{eq:def-parallel}).
Since $p\parallel q$ means that the neighbourhood of $p$ in the
distant graph is a subset of the neighbourhood of $q$, we get, by
virtue of this symmetry:

\begin{cor}\label{cor:2}
The neighbourhood of a point $p\in\PP(R)$ in the distant graph cannot
be a proper subset of the neighbourhood of a point $q\in\PP(R)$.
\end{cor}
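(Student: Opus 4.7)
The plan is essentially a one-line argument, since the real work has already been done in Theorem~\ref{thm:2} and Corollary~\ref{cor:1}. First I would unwind the definitions to see what is being asserted: the neighbourhood $\PP(R)_p$ being a subset of $\PP(R)_q$ says precisely that $x\,\dis\, p \Rightarrow x\,\dis\, q$ for every $x\in\PP(R)$, which by (\ref{eq:def-parallel}) is literally the definition of $p\parallel q$.

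Next I would invoke Corollary~\ref{cor:1}, which tells us that $\parallel$ is an equivalence relation and in particular symmetric. Hence $p\parallel q$ implies $q\parallel p$, which, by reversing the definitional unwinding, yields $\PP(R)_q\subseteq \PP(R)_p$. The two inclusions together force $\PP(R)_p=\PP(R)_q$, so a proper inclusion is impossible.

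If I had to name an obstacle, it would be purely expository: the reader might not notice that the inclusion of neighbourhoods is just the definition of radical parallelism restated, so I would spell this equivalence out explicitly. Apart from that, the proof is immediate from the symmetry established in Corollary~\ref{cor:1} (itself a consequence of the characterization $p\parallel q \Leftrightarrow \quer p=\quer q$ in Theorem~\ref{thm:2}, where the right-hand side is manifestly symmetric).
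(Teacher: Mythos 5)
Your argument is correct and is exactly the paper's: the authors derive Corollary~\ref{cor:2} by noting that $\PP(R)_p\subseteq\PP(R)_q$ is precisely the statement $p\parallel q$, and then invoking the symmetry of $\parallel$ established in Corollary~\ref{cor:1}. Nothing is missing.
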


Furthermore, we have
\begin{equation}\label{eq:gleichm}
 \#\{x\in\PP(R)\mid x\parallel p\}=\#\rad\,R
\end{equation}
for all $p\in\PP(R)$; in fact, Theorem \ref{thm:1} implies that
(\ref{eq:gleichm}) holds for $p=R(1,0)$, whence the assertion follows
from the transitive action of $\GL_2(R)$ on $\PP(R)$ and
(\ref{eq:par-invariant}). Thus the ``size'' of $\rad\,R$ can be
recovered from the distant graph on $\PP(R)$ as the cardinality of an
(arbitrarily chosen) class of radically parallel points. In
particular, $\parallel$ is the equality relation if, and only if,
$\rad\,R=\{0\}$.
\par
Another easy consequence of (\ref{eq:dist-invariant}) and Theorem
\ref{thm:2} is
\begin{equation}\label{eq:par->nichtdist}
  p\parallel q
                 \,\Leftrightarrow\,
  \quer p=\quer q
                 \Rightarrow
  \quer p \,\notdis\, \quer q
                 \,\Leftrightarrow\,
  p \,\notdis\, q
\end{equation}
for all $p,q\in \PP(R)$. Note that here our assumption $1\neq 0$ is
essential, since it guarantees that $\dis$ is an antireflexive
relation. (The only point of the projective line over the zero-ring
is distant to itself.) In general, however, the converse of
(\ref{eq:par->nichtdist}) is not true:

\begin{thm}\label{thm:3}
The relations ``radically parallel'' and ``non-distant'' on $\PP(R)$
coincide if, and only if, $R$ is a local ring.
\end{thm}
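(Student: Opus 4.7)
My plan is to reduce the equivalence to the factor ring $\quer R = R/\rad\,R$, exploiting the two equivalences already established: $p\parallel q\Leftrightarrow \quer p=\quer q$ (Theorem \ref{thm:2}) and $p\,\notdis\,q\Leftrightarrow\quer p\,\notdis\,\quer q$ (formula (\ref{eq:dist-invariant})). Combining these immediately shows that $\parallel$ and $\notdis$ coincide on $\PP(R)$ if and only if $\notdis$ is the equality relation on $\PP(\quer R)$; in other words, any two distinct points of $\PP(\quer R)$ must be distant.

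Since $R$ is local precisely when $\quer R$ is a (skew) field---a standard fact from local ring theory that is also consistent with the paper's reference to Lam---the problem reduces to proving the following reformulation: \emph{distinct points of $\PP(\quer R)$ are distant if and only if $\quer R$ is a skew field.}

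For the ``if'' direction I would invoke the familiar fact that on the projective line over a skew field, two distinct points are represented by linearly independent admissible pairs; these form a basis of $\quer R^2$, so the associated $2\times 2$ matrix lies in $\GL_2(\quer R)$, giving the distant relation. For the converse, I would use (\ref{eq:distant-(1,0)}) applied to $\quer R$, which describes the distant neighbourhood of $\quer R(\quer 1,\quer 0)$ as $\{\quer R(c,\quer 1)\mid c\in\quer R\}$. If every point distinct from $\quer R(\quer 1,\quer 0)$ lies in this neighbourhood, then for each nonzero $b\in\quer R$ the point $\quer R(\quer 1,b)$ must coincide with some $\quer R(c,\quer 1)$; unpacking the definition of point equality (left-proportionality by a unit) produces a unit $u\in\quer R^{\,*}$ with $ub=\quer 1$, forcing $b$ itself to be a unit. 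Hence every nonzero element of $\quer R$ is invertible, which is the desired skew-field property.

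I do not anticipate any real obstacle; the whole argument is essentially bookkeeping built on Theorem \ref{thm:2}, (\ref{eq:dist-invariant}), and (\ref{eq:distant-(1,0)}). The only mild technical point is handling possible non-commutativity in the very last step, but this is immediate from the left-scalar convention used to define points of $\PP(R)$.
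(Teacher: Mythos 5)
Your proposal is correct, but it takes a genuinely different route from the paper. The paper never passes to the factor ring: it uses the transitivity of $\GL_2(R)$ to reduce to comparing the two relations at the single point $R(1,0)$, then argues directly in $R$ --- for a local ring it invokes $R\setminus R^*=\rad\,R$ and the unimodularity of admissible pairs to show that $R(1,0)\,\notdis\,x$ forces $x=R(1,b)$ with $b\in\rad\,R$, and conversely it shows that coincidence of the relations yields $R\setminus R^*\subset\rad\,R$, so that $R\setminus R^*$ is a two-sided ideal and $R$ is local. You instead combine Theorem \ref{thm:2} with (\ref{eq:dist-invariant}) and the surjectivity of $p\mapsto\quer p$ to translate the whole question into the statement that distinct points of $\PP(\quer R)$ are distant, and then appeal to two standard facts: $R$ is local iff $\quer R$ is a division ring, and over a division ring the distant relation is exactly ``distinct''. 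Your reduction is conceptually cleaner --- it isolates why the radical is the right ideal to quotient by --- but it imports those two external facts, whereas the paper's argument is more elementary and self-contained, resting only on Theorem \ref{thm:1} and unimodularity, and it produces the local-ring property in the concrete form $R\setminus R^*=\rad\,R$ rather than via the residue division ring. Both of the facts you invoke are standard (the first is part of Lam's Theorem 19.1, which the paper itself cites), and your verification that $\quer R(\quer 1,b)\in\{\quer R(c,\quer 1)\mid c\in\quer R\}$ forces $b\in\quer R^{\,*}$ is sound, so there is no gap.
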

\begin{proof}
Since $\GL_2(R)$ acts transitively on $\PP(R)$ and leaves $\dis$ and
$\parallel$ invariant, it suffices to characterize those rings where
\begin{equation}\label{eq:par=nichtdist}
 \{ x\in\PP(R) \mid R(1,0)\,\notdis\,x \}
 =
 \{ x\in\PP(R) \mid R(1,0) \parallel x \}.
\end{equation}
Furthermore, we recall the following property: If a pair $(a,b)\in
R^2$ is admissible and so the first row of an invertible matrix, then
the first column of the inverse matrix shows that $(a,b)$ is
unimodular, i.e., there are $a',b'\in R$ such that $aa'+bb'=1$.
 \par
Now let $R$ be local. Then $R\setminus R^*=\rad\,R$, since this is
the only maximal left ideal in $R$; see \cite[Theorem 19.1]{lam-91}.
The previous remark on unimodularity, applied to the local ring $R$,
shows that at least one entry of each admissible pair $(a,b)\in R^2$
is a unit. From this we get that a point $x\in\PP(R)$ satisfies $
R(1,0)\,\notdis\,x$ if, and only if, $x=R(1,b)$ with $b\in \rad\,R$.
But this is equivalent to $R(1,0)\parallel x$ by Theorem \ref{thm:1}.
 \par
Conversely, suppose that (\ref{eq:par=nichtdist}) holds. Choose any
non-unit $b\in R$. Then $R(1,b)$ is a point and $R(1,0)\,\notdis\,
R(1,b)$ implies $b\in\rad\,R$ by Theorem \ref{thm:1}. Hence
$R\setminus R^*\subset\rad\,R$ and, since $\rad\,R\subset R\setminus
R^*$ holds trivially, $R\setminus R^*=\rad\,R$ is a two-sided ideal.
But this means that $R$ is a local ring.
\end{proof}

Corollary \ref{cor:1}, Corollary \ref{cor:2}, and (\ref{eq:gleichm})
generalize well-known results on the projective line over a local
ring. Cf.\ \cite[Proposition 2.4.1]{herz-95}.

\section{Non-linear models of affine spaces}\label{se:cremona}

In this section $R$ is a ring and $K\neq R$ is a field contained in
the centre of $R$. So $R$ is an algebra over $K$ with finite or
infinite dimension. The point set of the \emph{chain geometry}
$\Sigma(K,R)$ is the projective line over $R$, the \emph{chains} are
the $K$-sublines of $\PP(R)$; cf. \cite[p.~790]{herz-95}.
\par
We fix the point $R(1,0)=:\infty$. By (\ref{eq:distant-(1,0)}), the
mapping
\begin{equation}\label{eq:def-iota}
  \iota:R\to\PP(R)_\infty: z\mapsto R(z,1)
\end{equation}
is a bijection. We consider $R$ as an affine space over $K$.
 \par
For each subset $\cS\subset\PP(R)$ let
$(\cS\cap\PP(R)_\infty)^{\iota^{-1}}$ be the \emph{affine trace} of
$\cS$. The affine traces of the chains through $\infty$ are precisely
the so-called \emph{regular lines} $Ku+v$ ($u\in R^*$, $v\in R$);
cf.\ \cite[Proposition 3.5.3]{herz-95}. By reversing the order of the
coordinates in Theorem \ref{thm:1}, we obtain
\begin{equation*}
 (\rad\,R)^\iota =  \{x\in\PP(R)\mid x\parallel R(0,1)\},
\end{equation*}
whence $\rad\,R$ is the affine trace of $\{x\in\PP(R)\mid x\parallel
R(0,1)\}$. One can easily check that the affine trace of
$\{x\in\PP(R)\mid x\,\notdis\,R(0,1)\}$ equals $R\setminus R^*$. In
general, however, $(R\setminus R^*)^\iota$ is not equal to
$\{x\in\PP(R)\mid x\,\notdis\,R(0,1)\}$. Let, for example, $R=\RR+\RR
j$ be the ring of real anormal-complex numbers, where $j^2=1$ and
$j\notin\RR$; cf. \cite[p.~44]{benz-73}. Then
$R(1-j,1+j)\,\notdis\,R(0,1)$, but $R(1-j,1+j)\notin R^\iota$,
because $1+j$ is not invertible.
 \par
Each matrix $\gamma\in\GL_2(R)$ defines an automorphism of the chain
geometry $\Sigma(K,R)$. Let us write $D_\gamma$ for the set of all
points $z\in R$ such that $z^{\iota\gamma}\in \PP(R)_\infty$. Then
the mapping
\begin{equation}\label{eq:def-gammastrich}
 \gamma': D_\gamma\to R: z\mapsto z^{\iota\gamma\iota^{-1}}
\end{equation}
is injective, but in general the domain and the image of $\gamma'$
will be proper subsets of $R$.

\begin{thm}\label{thm:trafo}
Let $\gamma=\SMat2{a& b\\c &d}$ be a $2\times 2$-matrix over $R$.
Then the following hold:
 \begin{enumerate}
 \item
 The matrix $\gamma$ is invertible and the corresponding mapping $\gamma'$, given by
 (\ref{eq:def-gammastrich}), is defined for all points of $R$ if, and
 only if,
\begin{equation}\label{eq:gammasmatrix}
 a,d\in R^*\mbox{, and }b\in\rad\,R.
\end{equation}
 \item
If (\ref{eq:gammasmatrix}) is satisfied then the corresponding
mapping $\gamma':R\to R$ is an affine transformation for $b=0$, and a
non-affine bijective transformation for $b\in\rad\,R\setminus\{0\}$
and $K\neq\GF(2)$.
\end{enumerate}
\end{thm}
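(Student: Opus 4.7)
The plan is to make everything explicit and then deduce both parts via matrix factorisations. Unwinding the definitions gives
\[\gamma'(z) = (za + c)(zb + d)^{-1}, \qquad D_\gamma = \{z \in R \mid zb + d \in R^*\},\]
so $D_\gamma = R$ amounts to saying $zb + d$ is a unit for every $z \in R$. The algebraic tool behind both parts is the identity
\[\Mat2{a & b \\ c & d} = \Mat2{1 & bd^{-1} \\ 0 & 1}\Mat2{a - bd^{-1}c & 0 \\ 0 & d}\Mat2{1 & 0 \\ d^{-1}c & 1},\]
valid whenever $d \in R^*$. Its outer factors always lie in $\GL_2(R)$, so once $d$ is a unit the invertibility of $\gamma$ is equivalent to invertibility of the middle diagonal matrix, i.e., to $a - bd^{-1}c \in R^*$.

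For part (a), direction $(\Leftarrow)$ is immediate: if $b \in \rad R$, then $zb + d = d(1 + d^{-1}zb)$ is a unit for every $z$ by~\eqref{eq:radikal}, and $a - bd^{-1}c$ is a unit since it equals $a \in R^*$ plus the radical element $-bd^{-1}c$; the factorisation then yields $\gamma \in \GL_2(R)$. Conversely, if $\gamma \in \GL_2(R)$ and $D_\gamma = R$, then $z = 0$ gives $d \in R^*$, and letting $w = d^{-1}z$ range over $R$ turns the condition $zb + d \in R^*$ into $1 + wb \in R^*$ for every $w$, which forces $b \in \rad R$ by~\eqref{eq:radikal}. Finally, the factorisation read in reverse forces $a - bd^{-1}c \in R^*$, and then $a = (a - bd^{-1}c) + bd^{-1}c$ is again unit plus radical, hence a unit.

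For part (b), the case $b = 0$ reduces to $\gamma'(z) = z \cdot ad^{-1} + cd^{-1}$: right multiplication by the unit $ad^{-1}$ is $K$-linear (since $K$ is central) and bijective. For $b \in \rad R \setminus \{0\}$, bijectivity is obtained by noting that $\gamma^{-1}$ again satisfies the hypothesis of part~(a) -- its diagonal entries are units and its $(1,2)$-entry lies in $\rad R$, by the same unit-plus-radical bookkeeping -- so $(\gamma^{-1})'$ is likewise defined on all of $R$ and inverts $\gamma'$. The main obstacle, and the only step that genuinely uses $K \neq \GF(2)$, is the non-affinity. Suppose for contradiction that $\gamma'$ is $K$-affine; then $\psi(z) := \gamma'(z) - \gamma'(0)$ is $K$-linear, and a short computation gives $\psi(z) = (za - cd^{-1}zb)(zb + d)^{-1}$. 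Centrality of $\alpha \in K$ yields $\psi(\alpha z) = \alpha(za - cd^{-1}zb)(\alpha zb + d)^{-1}$, so the required identity $\psi(\alpha z) = \alpha\psi(z)$, evaluated at $z = 1$, reduces to
\[(a - cd^{-1}b)(\alpha b + d)^{-1} = (a - cd^{-1}b)(b + d)^{-1}.\]
Since $cd^{-1}b \in \rad R$, the element $a - cd^{-1}b$ is a unit and may be cancelled on the left, giving $(\alpha - 1)b = 0$. Choosing $\alpha \in K \setminus \{0, 1\}$, which exists precisely because $K \neq \GF(2)$, makes $\alpha - 1$ a central unit and forces $b = 0$, a contradiction.
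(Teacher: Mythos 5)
Your proof is correct and reaches the same conclusions, but by a noticeably different route, and with one systematic slip you should repair. The slip: $\PP(R)$ is defined via the \emph{left} module $R^2$, so $R(za+c,\,zb+d)=R\left((zb+d)^{-1}(za+c),\,1\right)$ and hence $\gamma'(z)=(zb+d)^{-1}(za+c)$ as in (\ref{eq:lineargebrochen}), not $(za+c)(zb+d)^{-1}$; for noncommutative $R$ these differ. Your description of $D_\gamma$ is unaffected, and every subsequent computation transposes cleanly (e.g.\ $\psi(z)=(zb+d)^{-1}z(a-bd^{-1}c)$, and one cancels the unit $a-bd^{-1}c$ on the right instead of the left, still arriving at $(\alpha-1)b=0$), so nothing breaks --- but as written your formulas sit on the wrong side. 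Concerning the route: for the ``only if'' half of (a) the paper argues geometrically, translating totality of $\gamma'$ into $\infty^\gamma\parallel\infty$ and invoking Theorem \ref{thm:2} to get $a\in R^*$ and $b\in\rad\,R$, with $d\in R^*$ from a distance argument; you instead read $d\in R^*$ (take $z=0$) and $b\in\rad\,R$ directly off $D_\gamma=R$ and extract $a\in R^*$ from the triangular factorization --- more elementary, at the cost of not showcasing the radical parallelism the paper is built around. For bijectivity in (b) the paper applies Corollary \ref{cor:2} to conclude $(\PP(R)_\infty)^\gamma=\PP(R)_\infty$, while you verify that $\gamma^{-1}$ again satisfies (\ref{eq:gammasmatrix}) so that $(\gamma^{-1})'$ is total and inverts $\gamma'$; this is sound (the $(1,2)$-entry of $\gamma^{-1}$ does lie in the two-sided ideal $\rad\,R$ and its diagonal entries are unit-plus-radical), though you should display that inverse explicitly rather than appeal to ``bookkeeping''. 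For non-affinity the paper reduces via its factorization to $\beta'\colon z\mapsto(1-zb)^{-1}z$ and exhibits three non-collinear points $0$, $(1-b)^{-1}$, $(1-kb)^{-1}k$ on the image of the line $K$; your direct test of $K$-homogeneity of $\gamma'(z)-\gamma'(0)$ at $z=1$, using a central scalar $\alpha\in K\setminus\{0,1\}$, is an equally valid and slightly slicker way to exploit the hypothesis $K\neq\GF(2)$, and it avoids the reduction to $\beta'$ altogether.
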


\begin{proof}
(a) Let $\gamma\in\GL_2(R)$ and suppose that $\gamma'$ is defined for
all points of $R$. So we obtain
\begin{equation}\label{eq:global}
  (\PP(R)_\infty)^\gamma\subset\PP(R)_\infty.
\end{equation}
By definition, the distant relation $\dis$ is invariant under
$\GL_2(R)$. Therefore
$(\PP(R)_\infty)^\gamma=\PP(R)_{\infty^\gamma}$, so that
(\ref{eq:global}) is equivalent to $R(a,b)=\infty^\gamma\parallel
\infty$. Thus $a\in R^*$ and $b\in\rad\,R$ by Theorem \ref{thm:2}.
Furthermore, $R(c,d)\,\dis\,R(a,b)\parallel \infty$ yields
$R(c,d)\,\dis\,\infty$, so that $d\in R^*$.
 \par
Conversely, we infer from (\ref{eq:gammasmatrix}) that $a-bd^{-1}c\in
R^*$. Hence
\begin{equation}\label{eq:faktor-gamma}
   \Mat2{-1& 0\\0 &d}
   \Mat2{1& -b\\0 &1} \Mat2{-a+bd^{-1}c& 0\\d^{-1}c &1}
 = \Mat2{a& b\\c &d}
\end{equation}
shows that $\gamma\in\GL_2(R)$. It follows from $d\in R^*$,
$b\in\rad\,R$, and (\ref{eq:radikal}) that $zb+d\in R^*$ for all
$z\in R$. Therefore $\gamma$ yields the mapping
\begin{equation}\label{eq:lineargebrochen}
 \gamma': R\to R: z\mapsto (zb+d)^{-1}(za+c)
\end{equation}
with domain $D_\gamma=R$.
\par
(b) We deduce from (a) that (\ref{eq:global}) is satisfied. By
Corollary \ref{cor:2}, applied to the points $\infty$ and
$\infty^\gamma$, it follows that
$(\PP(R)_\infty)^\gamma=\PP(R)_{\infty^\gamma}=\PP(R)_\infty$, whence
the injective mapping $\gamma'$ is bijective. There are two cases:
 \par
If $\infty= \infty^\gamma$ then $b=0$. This implies that
$\gamma':R\to R: z\mapsto d^{-1}(za+c)$ is an affine transformation;
see also \cite[Lemma 3.5.7]{herz-95}.
%%and also \cite{herz-93}.
 \par
If $\infty\neq \infty^\gamma$ then $b\in\rad\,R\setminus\{0\}$. We
observe, as above, that the first and the third matrix on the left
hand side of (\ref{eq:faktor-gamma}) both yield affine
transformations. Hence we may confine our attention to the
transformation
\begin{equation}\label{eq:einfachetrafo}
      \beta': R\to R: z\mapsto (1-zb)^{-1}z
\end{equation}
arising from the second matrix in (\ref{eq:faktor-gamma}). Let now
$K\neq\GF(2)$. Then there is an element $k\in K\setminus\{0,1\}$. The
image of the line $K$ under $\beta'$ carries the points
$0^{\beta'}=0$, $1^{\beta'}=(1-b)^{-1}$, and
$k^{\beta'}=(1-kb)^{-1}k$. These points are non-collinear, since
$b\in\rad\,R\setminus\{0\}$ implies that $b\notin K$, whence $1-b$
and $1-kb$ are linearly independent over $K$. Thus $\beta'$ cannot be
an affine transformation.
\end{proof}

The following example shows that we cannot drop the assumption
$K\neq\GF(2)$ in Theorem \ref{thm:trafo} (b). Let
$R=\GF(2)+\GF(2)\eps$ be the ring of dual numbers over $\GF(2)$,
where $\eps^2=0$ and $\eps\notin\GF(2)$. The invertible matrix
%\begin{equation*}
 $\delta:=\SMat2{1 &\eps \\0&1+\eps}$
%\end{equation*}
yields a transformation on $\PP(R)$ that interchanges $\infty$ with
$R(1,\eps)$, but fixes the remaining four points of $\PP(R)$. Hence
$\delta'=\id_R$ is an affine transformation, even though
$\infty^\delta\neq\infty$.

\par
Suppose that $\gamma'$ is a non-affine bijection according to Theorem
\ref{thm:trafo} (b). We obtain a \emph{non-linear model} of the
affine space on the $K$-vector space $R$ by applying the bijection
$\gamma'$ to the points and lines of this affine space. So we get a
``new'' space which has the same point set, but the $\beta'$-images
of the ``old'' lines will be the lines in the ``new'' sense. In view
of Theorem \ref{thm:trafo} (b), such non-linear models of affine
spaces are possible whenever the radical of $R$ is non-zero and
$K\neq\GF(2)$. It would be interesting to describe explicitly the
``new lines'' in a purely geometric way. However, this is beyond the
scope of this article. Below we just give two examples, one of it
generalizes the well-known \emph{parabola model} of the real affine
plane \cite[p.~67]{polster+s-01}.
\par
We have several distinguished subgroups of $\GL_2(R)$ which, by
Theorem \ref{thm:trafo}, fix $\PP(R)_\infty$ as a set. The
commutative group
\begin{equation}\label{eq:def-B}
    B:=\left\{\SMat2{1&-b\\0&1}\mid b\in\rad\,R\right\}
\end{equation}
acts regularly on the set of points that are radically parallel to
$\infty$; cf.\ Theorem \ref{thm:1}. For each $\beta\in B$ the induced
mapping $\beta':R\to R$ is given by (\ref{eq:einfachetrafo}). Next,
there is the commutative group
\begin{equation}\label{eq:def-T}
  T:=\left\{ \SMat2{1& 0\\c &1} \mid c\in R \right\}.
\end{equation}
Each $\tau\in T$ fixes $\infty$ and, by Theorem \ref{thm:trafo}, it
yields the translation $\tau':R\to R: z\mapsto z+c$. Every
translation of $R$ arises in this way. A transformation $\tau\in T$
need not fix every point $p\parallel\infty$. In fact, if $\tau$ is
the matrix in formula (\ref{eq:def-T}) then $p=R(1,b)$, with
$b\in\rad\,R$, remains fixed if, and only if,
\begin{equation}\label{eq:fix-T}
  bcb=0.
\end{equation}
For a subset $S\subset R$ let $\ann(S):=\{a\in R\mid aS=Sa=0\}$
denote the \emph{annihilator} of $S$ in $R$. So, for example, $c\in
\ann(\rad\,R)$ implies that (\ref{eq:fix-T}) is fulfilled for all
$b\in\rad\,R$. Finally, a straightforward calculation shows that
\begin{equation}\label{eq:def-N}
  N:=\left\{\SMat2{1+ n_1& 0\\n_2 &1}\mid n_1,n_2\in \ann(\rad\,R)\cap
   \rad\,R\right\}
\end{equation}
is a commutative subgroup of $\GL_2(R)$. (Observe that
$(1+n_1)(1-n_1)=1$.) Each $\nu\in N$ stabilizes $\infty$ and, by
Theorem \ref{thm:trafo}, it determines an affinity $\nu':R\to R:
z\mapsto z(1+n_1)+n_2$. The groups $N$ and $B$ have the property that
\begin{equation}\label{eq:vertauschbar}
    \nu\beta=\beta\nu
    \mbox{ for all } \nu\in N\mbox{ and all }\beta\in B.
\end{equation}
Every point $p\parallel\infty$ remains fixed under every
transformation $\nu\in N$. For, clearly, $\infty^\nu=\infty$ and,
since $p$ can be written as $\infty^\beta$ with $\beta\in B$, we
obtain $p^\nu=\infty^{\beta\nu}=\infty^{\nu\beta}=p$ from
(\ref{eq:vertauschbar}).

\par
We adopt the notation $B':=\{\beta'\mid \beta\in B\}$; $T'$ and $N'$
are defined similarly.
\par
 In the remainder of this section, we suppose that $m:=\dim_K R$ is
finite. Then the so-called \emph{cone of singularity} $R\setminus
R^*$ is an algebraic set; see \cite[Remark 3.5.4]{herz-95}. Also, the
affine trace of a chain is an \emph{affine normal rational curve} of
degree $\leq m$, provided that it has at least two points in common
with $\PP(R)_\infty$; see \cite[Theorem 3.6.5]{herz-95}. According to
\cite[p.~804]{herz-95}, the mappings given in
(\ref{eq:def-gammastrich}) are \emph{Cremona transformations}; cf.\
also \cite{benz-77} and \cite[p.~129]{benz+s+s-81}. In particular,
the mappings described in Theorem \ref{thm:trafo} (b) are bijective
Cremona transformations $R\to R$.
 \par
Let $s$ be the dimension of the Jacobson radical of $R$. Then
$1\notin\rad\,R$ implies $s\leq m-1$. All elements of $\rad\,R$ are
nilpotent; see \cite[Proposition 4.18]{lam-91}. Thus $y^{s+1}=0$ for
all $y\in\rad\,R$. So, for each $\beta\in B$, formula
(\ref{eq:einfachetrafo}) can be written in polynomial form as
\begin{equation}\label{eq:bruchfreietrafo}
  \beta': R\to R:
  z\mapsto \left(1+zb+\cdots+(zb)^{s}\right)z.
\end{equation}

\par
The final part of this section is devoted to the investigation of
two particular examples, where we are able to describe explicitly the
images of the lines under a fixed non-identical transformation
$\beta'\in B'$. It will be easy to show that non-regular lines go
over to non-regular lines and that the images of the regular lines
are ``certain'' parabolas. Our main objective is to make more precise
this last statement. We rule out, however, the field with two
elements from our discussion, because in an affine space over
$\GF(2)$ a parabola has only two points, and it would take rather
complicated formulations to include this case.

\begin{exa}\label{exa:1}
Let $R=K+K\eps$ be the ring of dual numbers over $K$, where
$K\neq\GF(2)$. This is a local commutative ring, and its radical is
$K\eps$. The lines parallel to $K\eps$ are called \emph{vertical}. In
formula (\ref{eq:bruchfreietrafo}) we may put $z=z_1+z_2\eps$ and
$b=t\eps$ with $z_1,z_2,t\in K$. Thus we get
\begin{equation}\label{eq:einfach-dual}
    \beta':R\to R: (z_1+z_2\eps)\mapsto z_1+(t z_1^2 +z_2)\eps.
\end{equation}
We assume that $t\neq 0$. All non-regular lines or, said differently,
all vertical lines are invariant under $\beta'$. In order to describe
the images of the regular lines, we consider the group $N'$. Its
transformations are obtained from (\ref{eq:def-N}) by substituting
$n_1=l_1\eps$ and $n_2=l_2\eps$, where $l_1,l_2\in K$, and this gives
\begin{equation}\label{eq:nu-strich-dual}
  \nu' : R\to R : z_1+z_2\eps\mapsto z_1+(z_1l_1+z_2+l_2)\eps.
\end{equation}
Then, either $l_1\neq 0$, whence $\nu'$ is a non-trivial shear with
the vertical axis $z_1=-l_2/l_1$, or $l_1=0$, whence $\nu'$ is a
\emph{vertical translation}, i.e.\ a translation along the vertical
line $K\eps$. Altogether, since $l_1$ and $l_2$ can be chosen
arbitrarily in $K$, the transformations in $N'$ are all the shears
with a vertical axis and all the vertical translations. From a
projective point of view this is the group of all elations whose
centre is the point at infinity of all the vertical lines.

If $L$ is a regular line then there is a $\nu'\in N'$ such that
$L^{\nu'}=K$; for if $L$ and $K$ are parallel then $\nu'$ can be
chosen as a vertical translation, and otherwise as a non-trivial
shear whose vertical axis contains the point $K\cap L$. Hence the
group $N'$ acts transitively and, by the commutativity of $N'$, even
regularly on the set of regular lines.
\par
It is clear that the image under $\beta'$ of the regular line $K$ is
a parabola $C$, say, with an equation $z_2=tz_1^2$. By the
transitivity of $N'$ on the set of regular lines and by
(\ref{eq:vertauschbar}), the set of $\beta'$-images of the regular
lines is the orbit of the parabola $C$ under the action of the group
$N'$, i.e.\ the set of all parabolas with an equation
\begin{equation}\label{eq:dual-bildpar}
  z_2=tz_1^2+l_1z_1+l_2 \mbox{ with } l_1,l_2\in K.
\end{equation}
In projective terms this is a net of conics mutually osculating at
the point at infinity of all vertical lines.
\par
For each translation $\tau':z\mapsto z+c$, $c\in R$, the point
$\infty^\beta=R(1,t\eps)$ is fixed under $\tau$, because $(t\eps)
c(t\eps)=0$; cf.\ (\ref{eq:fix-T}). But $C$ is the affine trace of a
chain through $\infty^\beta$; so $C^{\tau'}$ is the affine trace of a
chain through $\infty^{\beta\tau}=\infty^{\beta}$, whence, by the
above, $C^{\tau'}\in C^{N'}$. Therefore $C^{T'}\subset C^{N'}$. There
are two cases:
\par
If $\Char K=2$ then $C^{T'}\neq C^{N'}$, since in this case for every
parabola in $C^{T'}$ all its tangent lines are parallel to the line
$K$, whereas there is a non-trivial shear $\nu'\in N'$ which maps $C$
to a parabola whose mutually parallel tangent lines are not parallel
to $K$.
\par
Suppose now that $\Char K\neq 2$. Then equation
(\ref{eq:dual-bildpar}) can be written in the form
\begin{equation*}
  z_2+\frac{l_1^2}{4t}-l_2 =t\left(z_1+\frac{l_1}{2t}\right)^2.
\end{equation*}
Hence for each $\nu'\in N$ there exists a translation $\tau'\in T'$
with $C^{\nu'}=C^{\tau'}$. This is illustrated (with $\nu'\neq\tau'$)
in Figure \ref{abb3}. (We just proved an affine version of a theorem
on osculating conics; see \cite[2.5.4]{brau-76-1} or
\cite[Satz~2]{pauk-79}.) Thus $C^{T'}= C^{N'}$. So the mapping
(\ref{eq:einfach-dual}) leads us in a natural way to the
aforementioned \emph{parabola model} of the affine plane over $K$,
$\Char K\neq 2$. The point set of this model is the ring $R$; its
line set consists of all vertical lines together with all translates
of the parabola $C$.
%%%%%%%%%%%%%%%%%%%%%%%%%%%%%%%%%%%%%%%%%%%%%%%%%%%%%%%%%%%%%%
{\unitlength1cm
%%   %\begin{figure}[ht]
      \begin{center}
      \begin{minipage}[t]{5cm}
         \begin{picture}(5,3.45)%% Format 227:157
         \put(0.0 ,0.0){\includegraphics[width=5cm]{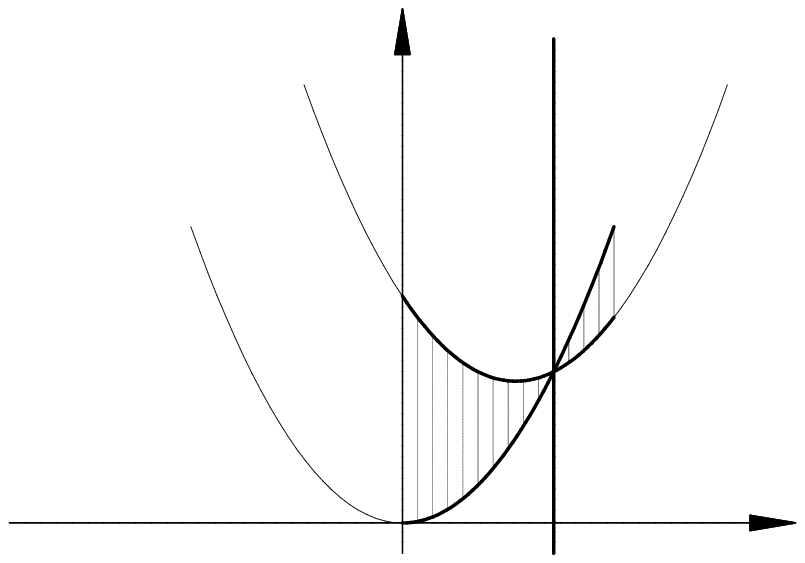}}
         \put(4.7,0.45){$z_1$}
         \put(2.65,3.4){$z_2$}
         %%%%\put(3.55,3.3){$z_1=\frac{-l_2}{l_1}$}
         \put(0.75,2.0){$C$}
         \put(0.3,3.0){$C^{\nu'}=C^{\tau'}$}
         \end{picture}
         {\refstepcounter{abbildung}\label{abb3}
         \centerline{Figure \ref{abb3}.}}
      \end{minipage}
      \end{center}
   %\end{figure}
}%
%%%%%%%%%%%%%%%%%%%%%%%%%%%%%%%%%%%%%%%%%%%%%%%%%%%%%%%%%%%%%%%%%%
\par
Observe that there is also a parabola model for $\Char K=2$. However,
since $C^{T'}\neq C^{N'}$, we have to use all the vertical lines and
the orbit of $C$ under the group $N'$ (rather than the translation
group) in order to obtain its line set.
\end{exa}

\par
The paper \cite{schaal-82} gives, for the real dual numbers, an
explicit description and some applications of the transformations
described in Theorem \ref{thm:trafo} (b).

\begin{exa}\label{exa:2}
Let $R$ be the ring of upper triangular $2\times 2$-matrices over
$K$, where $K\neq\GF(2)$. So, $R$ has a $K$-basis
\begin{equation*}
  j_1:=\SMat2{1&0\\ 0&0},\;
  j_2:=\SMat2{0&0 \\ 0&1},\;
  \eps:=\SMat2{0&1 \\ 0&0}.
\end{equation*}
There are two maximal ideals in $R$, namely $Kj_1+K\eps$ and
$Kj_2+K\eps$, and their union is the cone of singularity. The radical
is $K\eps$. A line or plane is said to be \emph{vertical} if it is
parallel to $K\eps$. In formula (\ref{eq:bruchfreietrafo}) we may put
$z=z_1j_1+z_2j_2+z_3\eps$ and $b=t\eps$ with $z_1,z_2,z_3,t\in K$.
Thus we get
\begin{equation}\label{eq:einfach-ternion}
    {\beta'}:R\to R: z_1j_1+z_2j_2+z_3\eps \mapsto
    z_1j_1+z_2j_2+(z_3+tz_1z_2)\eps.
\end{equation}
We assume that $t\neq 0$. All vertical lines are invariant under
$\beta'$. Each point on the cone of singularity remains fixed.
Consider a plane which is parallel to one of the planes of the cone
of singularity. The restriction of $\beta'$ to such a plane is a
planar shear, fixing the intersection of the plane with the cone of
singularity. Hence all non-regular lines go over to non-regular
lines.
 \par
The group $N'$ is obtained from (\ref{eq:def-N}) by putting
$n_1=l_1\eps$ and $n_2=l_2\eps$, where $l_1,l_2\in K$. So we get
\begin{equation}\label{}
  \nu':R\to R: z_1j_1+z_2j_2+z_3\eps\mapsto z_1j_1+z_2j_2+(z_1l_1+z_3+l_2)\eps.
\end{equation}
If $l_1\neq 0$ then $\nu'$ is a non-trivial \emph{admissible shear},
i.e.\ a shear in the direction of $K\eps$ with an axis parallel to
the plane $z_1=0$. In fact, the axis of $\nu'$ is the vertical plane
$z_1=-l_2/l_1$. If $l_1=0$ then $\nu'$ is a \emph{vertical
translation}, i.e.\ a translation along the vertical line $K\eps$.
Altogether, the transformations in $N'$ are all the admissible shears
and all the vertical translations. In projective terms this is the
group of all elations whose centre is the point at infinity of all
the vertical lines and whose axis is a plane through the line at
infinity of all the planes $z_1=\mathrm{const}$.
\par
%%%%%%%%%%%%%%%%%%%%%%%%%%%%%%%%%%%%%%%%%%%%%%%%%%%%%%%%%%%%%%
{\unitlength1cm
%%   %\begin{figure}[ht]
      \begin{center}
      \begin{minipage}[t]{3cm}
         \begin{picture}(3,3.67)%% Format 188:230
         \put(0.0 ,0.0){\includegraphics[width=3cm]{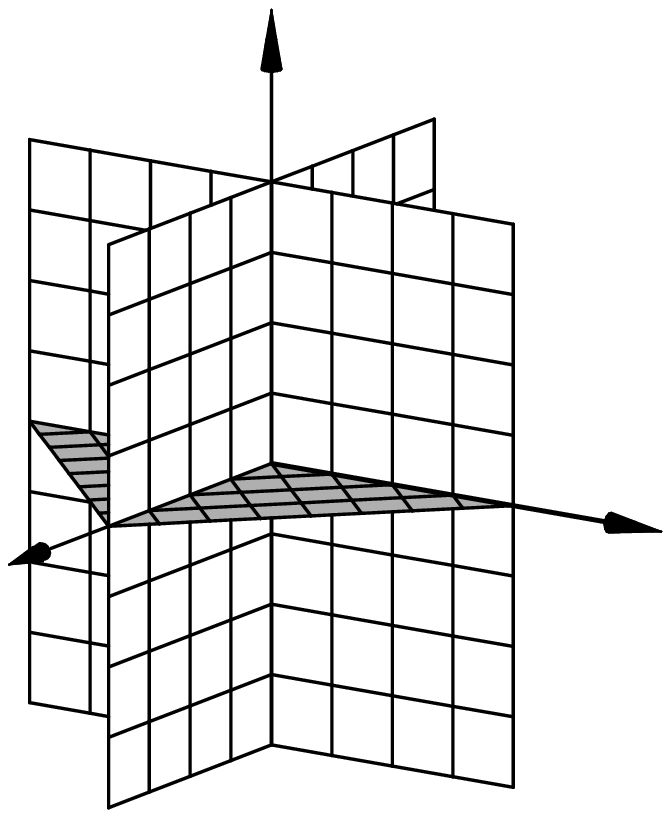}}
         \put(-0.35,1.2){$z_1$}
         \put(3.0,1.4){$z_2$}
         \put(1.35,3.5){$z_3$}
         \end{picture}
         {\refstepcounter{abbildung}\label{abb1}
         \centerline{Figure \ref{abb1}.}}
      \end{minipage}
      \hspace{2cm}
      \begin{minipage}[t]{3cm}
         \begin{picture}(3,3.67)
         \put(0.0 ,0.0){\includegraphics[width=3cm]{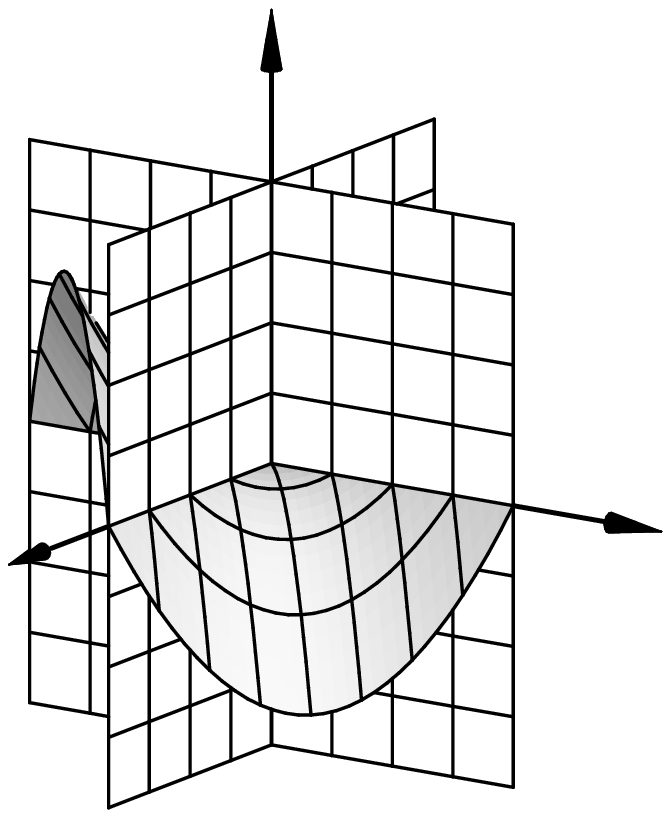}}
         \put(-0.35,1.2){$z_1$}
         \put(3.0,1.4){$z_2$}
         \put(1.35,3.5){$z_3$}
         \end{picture}
         {\refstepcounter{abbildung}\label{abb2}
          \centerline{Figure \ref{abb2}.}}
         \end{minipage}
      \end{center}
   %\end{figure}
}%
%%%%%%%%%%%%%%%%%%%%%%%%%%%%%%%%%%%%%%%%%%%%%%%%%%%%%%%%%%%%%%%%%%
Let $P$ be the plane with equation $z_3=0$. It is clear that
$P^{\beta'}$ is a hyperbolic paraboloid $H$ with equation
$z_3=tz_1z_2$, and that the set of images of the regular lines in $P$
is the set $\cH$ of all the parabolas contained in $H$. (Figure
\ref{abb1} shows, for $K=\RR$, the cone of singularity and the plane
$P$. In Figure \ref{abb2} their images under $\beta'$ are displayed.)
As in Example \ref{exa:1}, the group $N'$ operates regularly on the
set of non-vertical lines in a vertical plane which is non-parallel
to $z_1=0$: If $L$ is a regular line then there is a unique vertical
plane $V_L$ through $L$, and this plane is not parallel to $z_1=0$.
Thus there is a unique mapping $\nu\in N$ such that $L^{\nu'}=V_L\cap
P$. So, by this action of $N'$ and by (\ref{eq:vertauschbar}), the
set of $\beta'$-images of the regular lines is the union of all
orbits $C^{N'}$ with $C\in\cH$.
\par
An alternative description is possible using the translation group
$T'$. (The straightforward calculations leading to the following
results are left to the reader.) Fix a parabola $C\in\cH$ lying in
the vertical plane $V$, say. Then $\cV:=\{V^{\tau'}\cap H\mid
\tau'\in T'\}$ is a set of parabolas. It follows that each parabola
in $\cV$ is a translate of a parabola in $C^{N'}$ and vice versa.
There are two cases. If $\Char K=2$ then no parabola in
$\cV\setminus\{C\}$ is a translate of $C$. If $\Char K\neq 2$ then
all parabolas in $\cV$ are translates of $C$. (This is well known for
$K=\RR$.)
\par
Irrespective of $\Char K$, the $\beta'$-images of the regular lines
are---up to translations---precisely the parabolas in $\cH$.
Furthermore, if $\Char K\neq 2$ then this result remains true if
$\cH$ is replaced by $\cH_0:=\{C\in \cH\mid 0\in C\}$. Also we obtain
the following \emph{parabola model} of the affine $3$-space over $K$.
The point set of this model is the ring $R$; its line set consists of
all non-regular lines together with all translates of the parabolas
in $\cH$ (for arbitrary characteristic of $K$) or in $\cH_0$ (for
$\Char K\neq 2$ only).
\par
If $K=\RR$ then $R$ is isomorphic to the ring of \emph{real
ternions}. A detailed investigation of the chain geometry over the
real ternions can be found in \cite{benz-77}.
\end{exa}

The mappings discussed in Example \ref{exa:1} are closely related
with the geometry of the \emph{isotropic} (or: \emph{Galilean})
\emph{plane}. Likewise, Example \ref{exa:2} leads to a
three-dimensional Cayley-Klein geometry, namely the geometry of the
\emph{pseudo-isotropic space}. We refer, among others, to
\cite{sachs-87}, \cite{schroeder-95}, \cite[p.~136]{giering-82}, and
\cite[p.~24]{sachs-90}.
\par
The parabola model of the real affine plane is the starting point of
the theory of \emph{shift planes}. See \cite[p.~420]{salz+al-96}.
Such a plane arises, for example, from the real affine plane if the
vertical lines and the translates of a curve which is in a certain
sense ``close to a parabola'' are defined to be the ``new lines''.
Similarly, it seems plausible that ``in the neighbourhood'' of our
parabola model of the real affine $3$-space there could exist so
called \emph{$\RR^3$-spaces} (in the sense of \cite{betten-81}) other
than the real affine $3$-space. The reader should consult
\cite{klein-00} for results and a lot of references on this
interesting class of topological geometries.

%%\bibliographystyle{plain}
%%\bibliography{d:/forschung/separata/ketten}

Andrea Blunck, Fachbereich Mathematik, Universit\"at Hamburg,
Bundesstra{\ss}e 55, D-20146 Hamburg, Germany\\ Email:
\texttt{andrea.blunck@math.uni-hamburg.de}
 \par
Hans Havlicek, Institut f\"ur Geometrie, Technische Universit\"at,
Wiedner Hauptstra{\ss}e 8--10, A-1040 Wien, Austria\\ Email:
\texttt{havlicek@geometrie.tuwien.ac.at}
\end{document}